\date{January 13, 2026}
\definecolor{labelkey}{rgb}{0,0.08,0.45}
\definecolor{refkey}{rgb}{0,0.6,0.0}
\definecolor{Brown}{rgb}{0.45,0.0,0.05}
\definecolor{lime}{rgb}{0.00,0.8,0.0}
\definecolor{lblue}{rgb}{0.5,0.5,0.99}
\definecolor{OliveGreen}{rgb}{0,0.6,0}
\definecolor{tyrianpurple}{rgb}{0.4, 0.01, 0.24}
\definecolor{myseagreen}{HTML}{3FBC9D}
\definecolor{myblue}{rgb}{0.9,0.9,0.98}
\colorlet{hlcyan}{cyan!30}
\def\th@plain{%
  \thm@notefont{}%
  \itshape 
}
\def\th@definition{%
  \thm@notefont{}%
  \normalfont 
}
\newtheorem{theorem}{Theorem}[section]
\newtheorem{corollary}[theorem]{Corollary}
\newtheorem{example}[theorem]{Example}
\newtheorem{fact}[theorem]{Fact}
\crefname{theorem}{Theorem}{Theorems}
\Crefname{theorem}{Theorem}{Theorems}
\crefname{fact}{Fact}{facts}
\Crefname{fact}{Fact}{facts}
\crefname{equation}{}{equations}
\crefname{chapter}{Appendix}{chapters}
\crefname{item}{}{items}
\crefname{enumi}{}{}
\setlist[enumerate]{nosep}
\let\orig@label\label
\renewcommand{\label}[1]{%
  \begingroup
  \def\@currentlabelname{}%
  \ifx\current@theorem\relax\else
    \def\@currentlabelname{\current@theorem}%
  \fi
  \ifx\cref@currentlabel\undefined\else
    \let\@currentlabelname\cref@currentlabel
  \fi
  \orig@label{#1}%
  \endgroup
}
\newcommand{\seppthree}{\setlength{\itemsep}{-3pt}}
\newcommand{\nnn}{\ensuremath{{n\in\mathbb{N}}}}
\newcommand{\menge}[2]{\big\{{#1}~\big|~{#2}\big\}}
\newcommand{\sign}{\ensuremath{\operatorname{sign}}}
\newcommand{\scal}[2]{\left\langle{#1},{#2}\right\rangle}
\newcommand{\RR}{\ensuremath{\mathbb{R}}}
\newcommand{\reli}{\ensuremath{\operatorname{ri}}}
\newcommand{\aff}{\ensuremath{\operatorname{aff}}}
\newcommand{\Id}{\ensuremath{\operatorname{Id}}}
\providecommand{\norm}[1]{\lVert#1\rVert}
\providecommand{\epi}{\operatorname{epi}}
\author{
  Heinz H.\ Bauschke\thanks{
    Mathematics, University of British Columbia,
    Kelowna, B.C.\ V1V~1V7, Canada. E-mail: \texttt{heinz.bauschke@ubc.ca}.}
  ~~~and~~~
  Tran Thanh Tung\thanks{
    Mathematics, University of British Columbia,
    Kelowna, B.C.\ V1V~1V7, Canada. E-mail: \texttt{tung.tran@ubc.ca}.}
}
\title{\textsf{On a result by Meshulam}}
\begin{document}

\maketitle

\begin{abstract}
In 1996, Meshulam proved that every sequence generated 
by applying projections onto affine subspaces, 
drawn from a finite collection in Euclidean space, 
must be bounded. 

In this paper, we extend his result not only from affine subspaces 
to convex polyhedral subsets, but also from Euclidean to general 
Hilbert space. Various examples are provided to illustrate the
sharpness of the results. 
\end{abstract}

{
\small
\noindent
{\bfseries 2020 Mathematics Subject Classification:}
{Primary 47H09, 65K05, 90C25;
Secondary 52A37, 52B55.}
}

\noindent
{\bfseries Keywords:}
affine subspace, 
Hilbert space, 
polyhedral set, 
projection, 
reflection. 

\section{Introduction}

Throughout this paper,
\begin{equation}
  \text{$X$ is a real Hilbert space, with inner product $\scal{\cdot}{\cdot}$ and induced norm $\norm{\cdot}$}
\end{equation}
and
\begin{equation}
  \text{$\mathcal{A}$ is a nonempty finite collection of closed affine\footnotemark subspaces of $X$.}
\end{equation}
\footnotetext{Recall that a subset $A$ of $X$ is affine if $A-A$ is a linear subspace.}
Given a nonempty finite collection 
$\mathcal{C}$ of closed convex subsets of $X$ and an interval $\Lambda \subseteq [0,2]$, consider the associated set of relaxed projectors\footnotemark
\begin{equation}
  \mathcal{R}_{\mathcal{C},\Lambda} := \menge{(1-\lambda)\Id+\lambda P_C}{C\in\mathcal{C},\ \lambda\in \Lambda}
\end{equation}
where $P_C$ is the orthogonal projector onto $C$ and $\Id$ is the identity mapping on $X$.
\footnotetext{Given a nonempty closed convex subset $C$ of $X$, we denote by $P_C$ the operator which maps 
$x\in X$ to its unique nearest point in $C$.}

Building on work by Aharoni, Duchet, and Wajnryb \cite{ADW}, Meshulam states in \cite[Theorem~2]{Meshulam} the following result:

\begin{fact}[Meshulam]
\label{f:Meshulam}
Suppose that $X$ is finite-dimensional.
Let $\lambda\in[0,2[$ and $x_0\in X$.
Generate a sequence $(x_n)_{\nnn}$ in $X$ as follows:
Given a current term $x_n$, pick $R_n \in \mathcal{R}_{\mathcal{A},[0,\lambda]}$, and update via
\begin{equation}
  x_{n+1} := R_n x_n.
\end{equation}
Then the sequence $(x_n)_{\nnn}$ is bounded.
\end{fact}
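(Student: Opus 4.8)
The plan is to reduce to a normalized configuration, then run a \fejer-type argument driven by a firm-nonexpansiveness estimate, and finally to absorb the residual combinatorial difficulty by an induction on $\card\mathcal{A}$ in the spirit of Aharoni--Duchet--Wajnryb. Write $\mathcal{A}=\{A_1,\dots,A_m\}$, let $L_i:=A_i-A_i$ and $d(x,A_i):=\norm{x-P_{A_i}x}$, and write each update as $x_{n+1}=x_n-\lambda_n\bk{x_n-P_{A_{i_n}}x_n}$ with $\lambda_n\in[0,\lambda]$ and $i_n$ the index chosen at step~$n$. First I would pass to the quotient $X/L$, where $L:=\bigcap_i L_i$: since $x_n-P_{A_{i_n}}x_n\in L_{i_n}^{\perp}\subseteq L^{\perp}$, the $L$-component of $x_n$ is constant, so $(x_n)_{\nnn}$ is bounded in $X$ iff its image in $X/L$ is; the images of the $A_i$ form a finite family of affine subspaces, $m$ does not increase, and $\dim(X/L)<\dim X$ unless $L=\{\bzero\}$. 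Arguing by induction on $\dim X$ I may therefore assume $\bigcap_i L_i=\{\bzero\}$, whence $\psi:=\sum_i d(\cdot,A_i)^2$ is a coercive convex quadratic (this is where finite-dimensionality enters) with a unique minimizer $\bar x$; set $Q:=\max_i d(\bar x,A_i)$ and $\rho:=2Q/(2-\lambda)$. If in addition $\bigcap_i A_i\neq\varnothing$ the claim is immediate: for $a$ in that intersection, every relaxed projector with parameter in $[0,2]$ gives $\norm{x_{n+1}-a}\le\norm{x_n-a}$, so $\norm{x_n-a}\le\norm{x_0-a}$.

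The engine is the identity $\norm{\bk{(1-\mu)\Id+\mu P_A}x-a}^2=\norm{x-a}^2-\mu(2-\mu)d(x,A)^2$, valid for $a\in A$ and $\mu\in[0,2]$ because $\scal{x-P_A x}{P_A x-a}=0$. Using it with $a=P_{A_{i_n}}\bar x$, together with $\norm{\bar x-P_{A_{i_n}}\bar x}\le Q$ and $\lambda_n\le\lambda<2$, a short computation gives $\norm{x_{n+1}-\bar x}^2\le\norm{x_n-\bar x}^2-\lambda_n\,d(x_n,A_{i_n})\bigl[(2-\lambda)d(x_n,A_{i_n})-2Q\bigr]$. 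This dichotomizes the steps: a \emph{stable} step, $d(x_n,A_{i_n})\ge\rho$, has $\norm{x_{n+1}-\bar x}\le\norm{x_n-\bar x}$; a \emph{drift} step, $d(x_n,A_{i_n})<\rho$, has $x_n$ within distance $\rho$ of $A_{i_n}$, displacement $\norm{x_{n+1}-x_n}<\lambda\rho$, and $\norm{x_{n+1}-\bar x}^2<\norm{x_n-\bar x}^2+2Q\lambda\rho$. Hence $\norm{x_n-\bar x}^2\le\norm{x_0-\bar x}^2+2Q\lambda\rho\cdot\#\{\text{drift steps before }n\}$, so everything reduces to controlling drift steps. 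The crucial geometric input is that, since $\bigcap_i L_i=\{\bzero\}$, the set $\bigcap_i\{x:d(x,A_i)\le r\}=\{x:\psi(x)\le mr^2\}$ is \emph{bounded} for every $r\ge0$; consequently a run of drift steps of length $\ell$ that visits all $m$ indices confines $x$ to the bounded set $\bigcap_i\{x:d(x,A_i)\le(1+2\ell)\rho\}$.

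It remains to assemble these pieces, by induction on $m=\card\mathcal{A}$ with the dimension reduction of the first paragraph available at each stage (the base $m=1$ being the ``$\bigcap A_i\neq\varnothing$'' case). If some $A_i$ is used only finitely often, the tail of $(x_n)_{\nnn}$ is an orbit for $\mathcal{A}\setminus\{A_i\}$, hence bounded by the inductive hypothesis after re-running the reductions; so we may assume every $A_i$ is used infinitely often. One then has to show drift steps cannot accumulate, combining: runs of drift steps that hit all $m$ indices are harmless by the boundedness of $\bigcap_i\{d(\cdot,A_i)\le r\}$; a maximal block whose drift steps use only a proper subfamily $\mathcal{A}'\subsetneq\mathcal{A}$ is controlled via the inductive hypothesis for $\mathcal{A}'$ together with the stable-step inequality; and a large value of $\norm{x_n-\bar x}$ forces some $d(x_n,A_j)$ to be large (coercivity of $\psi$ at $\bar x$), so that a genuine excursion must eventually meet a stable step at a ``far'' subspace. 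Balancing these should produce a uniform $R^{*}$ such that every drift step occurs while $\norm{x_n-\bar x}<R^{*}$, and then $\norm{x_n-\bar x}\le\max\bigl\{\norm{x_0-\bar x},\sqrt{(R^{*})^2+2Q\lambda\rho}\bigr\}$ for all $n$.

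The step I expect to be the main obstacle is exactly this last balancing: forbidding an unbounded drift when $(i_n,\lambda_n)_{\nnn}$ is chosen adversarially so as to postpone the corrective stable steps for arbitrarily long stretches. This is the technical heart of the Aharoni--Duchet--Wajnryb method; it seems to require exploiting the full lattice of intersections of the arrangement $\mathcal{A}$, rather than any single \fejer\ sequence, and I would expect it to be again the most delicate point in the generalization from affine subspaces to convex polyhedral sets and from Euclidean to Hilbert space.
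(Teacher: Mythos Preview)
The paper does not give its own proof of \cref{f:Meshulam}: it is stated as a \emph{Fact}, attributed to Meshulam's 1996 paper, with only the remark that the hard case ``rests on an ingenious induction on the dimension of the space $X$.'' So there is no in-paper argument to compare against; what you have written is an attempted reconstruction of the cited result.

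Your setup is sound. The quotient by $L=\bigcap_i L_i$ is a legitimate reduction, the identity $\norm{((1-\mu)\Id+\mu P_A)x-a}^2=\norm{x-a}^2-\mu(2-\mu)d(x,A)^2$ for $a\in A$ is correct, and the derived inequality
\[
\norm{x_{n+1}-\bar x}^2\le\norm{x_n-\bar x}^2-\lambda_n\,d(x_n,A_{i_n})\bigl[(2-\lambda)d(x_n,A_{i_n})-2Q\bigr]
\]
does follow by expanding $x_{n+1}-\bar x=(x_n-\bar x)-\lambda_n(x_n-P_{A_{i_n}}x_n)$ and using $\scal{P_{A_{i_n}}x_n-P_{A_{i_n}}\bar x}{x_n-P_{A_{i_n}}x_n}=0$ together with Cauchy--Schwarz on the $\bar x-P_{A_{i_n}}\bar x$ cross term. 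The stable/drift dichotomy and the coercivity of $\psi$ when $\bigcap_i L_i=\{0\}$ are also fine.

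The genuine gap is exactly where you place it: the third paragraph does not contain a proof, only a wish list. Two concrete obstructions deserve emphasis. First, your observation that a drift run of length $\ell$ hitting all $m$ indices lands in $\bigcap_i\{d(\cdot,A_i)\le(1+2\ell)\rho\}$ gives a bound that \emph{grows with $\ell$}, so it does not by itself preclude unbounded drift; an adversary can hit each index once and then cycle through a proper subfamily for a very long time before revisiting. Second, invoking the inductive hypothesis for a subfamily $\mathcal{A}'\subsetneq\mathcal{A}$ only says that orbits for $\mathcal{A}'$ are bounded \emph{in terms of their starting point}; it gives no uniform radius, so consecutive $\mathcal{A}'$-blocks can still drift if their entry points are not controlled. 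Turning ``a large $\norm{x_n-\bar x}$ forces some $d(x_n,A_j)$ large'' into an actual termination argument against an adversarial schedule is precisely the nontrivial combinatorics you defer. As written, this is a plausible outline, not a proof.

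One correction to your closing expectation: in this paper the extensions to polyhedral sets and to general Hilbert space do \emph{not} revisit this combinatorial core. The polyhedral case is handled by observing that each projection onto a polyhedron coincides, at the given point, with the projection onto the affine hull of the face containing the image; since polyhedra have finitely many faces, one obtains a finite family of affine subspaces and applies \cref{f:Meshulam} as a black box. The infinite-dimensional case is handled by splitting off a common cofinite-dimensional kernel, reducing to a finite-dimensional orthogonal complement, and again invoking the finite-dimensional result. So the ``delicate point'' you anticipate for the generalizations never arises.
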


This boundedness result is easy to prove if 
$\bigcap_{A\in \mathcal{A}} A\neq \varnothing$ 
because each $P_A$ is (firmly) nonexpansive and so 
the sequence $(x_n)_\nnn$ is 
Fej\'{e}r monotone. 
The proof of \cref{f:Meshulam} in the case when the intersection 
of the affine subspaces is empty is more involved
and rests on an ingenious induction on the dimension of the space $X$.

In this paper, we provide the following new results on \cref{f:Meshulam}:
\begin{itemize}[itemsep=0pt]
  \item 
We extend \cref{f:Meshulam} from affine subspaces to polyhedral sets 
(see \cref{t:Meshpoly}).
  \item We then extend this to the case when $X$ is 
  infinite-dimensional (see \cref{t:Meshinfi}). 
  This is the main result of this paper. 
  \item We present limiting counterexamples (in \cref{s:counterex} and 
  \cref{s:reflections}). 
\end{itemize}

The rest of the paper is organized as follows: 
After discussing faces, we present the extension of \cref{f:Meshulam} to polyhedral sets in \cref{s:polyhedra}.
In \cref{s:infdim}, we obtain our main result, the extension 
to general Hilbert spaces. 
The sharpness of the results is illustrated 
by two examples provided in \cref{s:counterex}. 
In \cref{s:reflections}, we discuss the situation when 
the relaxation parameters approach $2$.

The notation is standard and follows, e.g., \cite{BC2017} 
and \cite{Rocky}. 

\section{Extension to polyhedral sets}

\label{s:polyhedra}

In this section, we assume that 
\begin{equation} 
\text{
$X$ is finite-dimensional.
}
\end{equation}

\subsection*{Faces and projections of convex sets}

Let $C$ be a nonempty convex subset of $X$.
Recall that a subset $F$ of $C$ is a \emph{face} of $C$ 
if whenever $x,y$ belong to $C$ and 
$F \cap \left]x,y\right[\neq \varnothing$, 
then $x,y$ both belong to $F$. 
Note that $C$ is a face of itself and 
denote the collection of all faces of $C$ by $\mathcal{F}(C)$.
We now set 
\begin{equation}
\label{e:F_c}
(\forall c\in C)\quad 
F_c := 
\bigcap_{c \in F \in \mathcal{F}(C)} F. 
\end{equation}
The following fact is well-known; see, e.g., 
the books by Rockafellar \cite{Rocky} and Webster \cite{Webster} 
(which also contain further information on faces):

\begin{fact}
\label{f:uniqueface}
Let $C$ be a nonempty convex subset of $X$. 
For each $c\in C$, the set $F_c$ (defined in \cref{e:F_c}) is a face of $C$; 
in fact, $F_c$ is the unique face of $C$ with $c\in\reli F_c$. 
Furthermore, 
\begin{equation}
C = \bigcup_{F\in \mathcal{F}(C)} \reli F
\end{equation}
forms a partition of $C$. 
\end{fact}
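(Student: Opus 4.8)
The plan is to prove the statement in four steps, after first recording two elementary closure properties of faces: (a) an arbitrary intersection of faces of $C$ is again a face of $C$, and (b) a face of a face of $C$ is a face of $C$. Both are proved in one line by chasing a point of $F\cap\left]x,y\right[$ through the relevant inclusions, so I would dispatch them as a preliminary observation. Property (a) immediately yields that $F_c$ — the intersection of the family of faces of $C$ containing $c$, which is nonempty since $C\in\mathcal{F}(C)$ — is itself a face of $C$, and by construction it is the smallest face of $C$ containing $c$; in particular $c\in F_c$.

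Second, I would show $c\in\reli F_c$. Suppose not; then $c$ lies in the relative boundary of $F_c$, and the supporting hyperplane theorem, applied \emph{inside} $\aff F_c$, produces a hyperplane $H$ of $\aff F_c$ through $c$ with $F_c$ contained in one of the closed halfspaces bounded by $H$. Since $H$ has codimension one in $\aff F_c$, automatically $F_c\not\subseteq H$, so $G:=F_c\cap H$ is a proper subset of $F_c$; and $G$ is the set of maximizers over $F_c$ of the affine functional defining $H$, hence a face of $F_c$, hence — by property (b) — a face of $C$. But then $c\in G\subsetneq F_c$ contradicts the minimality of $F_c$ among faces of $C$ containing $c$.

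Third, for uniqueness let $F$ be any face of $C$ with $c\in\reli F$. On the one hand $F_c\subseteq F$, since $F$ is a face of $C$ containing $c$ and $F_c$ is the smallest such. On the other hand, fix $y\in F$; because $c\in\reli F$, the segment $[y,c]$ extends slightly past $c$ inside $F$, giving $z\in F$ with $c\in\left]y,z\right[$. Since $y,z\in C$, $c\in F_c$, and $F_c$ is a face of $C$, the face property forces $y,z\in F_c$, in particular $y\in F_c$. Hence $F\subseteq F_c$ and so $F=F_c$.

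Finally, the partition claim follows: each $c\in C$ lies in $\reli F_c$ with $F_c\in\mathcal{F}(C)$, so $\{\reli F : F\in\mathcal{F}(C)\}$ covers $C$; and if $c\in\reli F\cap\reli F'$, the uniqueness just established gives $F=F_c=F'$, so the relative interiors of distinct faces are disjoint. I expect the only real obstacle to be the step $c\in\reli F_c$, and specifically the bookkeeping of ensuring the supporting hyperplane is chosen within $\aff F_c$ (so that it is proper and cuts out a strictly smaller face); the remaining steps are routine once the definition of face and the basic properties of relative interiors of convex sets in finite dimensions are in hand.
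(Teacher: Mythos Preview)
Your argument is correct and entirely self-contained. The paper, by contrast, gives no proof of its own: it simply records that the result is well known and points to Webster's \emph{Convexity} (Theorems~2.6.5(i) and 2.6.10) and Rockafellar's \emph{Convex Analysis} (Theorem~18.2 and Corollary~18.1.2) for the individual assertions. Your four steps---arbitrary intersections of faces are faces, so $F_c$ is the smallest face through $c$; a supporting hyperplane taken inside $\aff F_c$ to produce a strictly smaller face and force $c\in\reli F_c$ by contradiction with minimality; segment-extension from $c\in\reli F$ combined with the face property to get $F\subseteq F_c$; and the resulting covering/disjointness---are precisely the classical arguments one finds upon unpacking those references, so what you have written is a faithful expansion rather than a genuinely different route. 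One small remark: the paper's stated definition of ``face'' omits the word ``convex'' (Rockafellar's includes it explicitly), and your Step~3 tacitly uses convexity of $F$ when extending the segment $[y,c]$ past $c$ inside $F$; this is the intended reading and harmless, but it is worth a one-line acknowledgment if you want the write-up to be airtight relative to the definition as literally stated.
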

\begin{proof}
The fact that $F_c$ is a face follows from 
\cite[Theorem~2.6.5(i)]{Webster} while the 
uniqueness property was stated in \cite[Theorem~2.6.10]{Webster}. 
Because two faces whose relative interiors make a 
nonempty intersection must be equal (see \cite[Corollary~18.1.2]{Rocky}), 
we note that the partition claim follows from 
\cite[Theorem~18.2]{Rocky} or \cite[Theorem~2.6.10]{Webster}. 
\end{proof}

Recall that 
the \emph{affine hull} of a nonempty subset $S$ of $X$ is 
the smallest affine subspace of $X$ containing $S$; 
it is denoted by $\aff S$.
We are now in a position to state a recent result 
by Fodor and Pintea (see \cite[Theorem~3.1]{Fodor}):

\begin{fact}
\label{f:Fodor}
Let $C$ be a nonempty closed convex subset of $X$, 
let $F$ be a face of $C$, 
and let $x\in P_C^{-1}(\reli F)$.
Then
\begin{equation}
  P_Cx = P_{\aff F}x. 
\end{equation}
\end{fact}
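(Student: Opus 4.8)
The plan is to reduce everything to the obtuse-angle (variational inequality) characterization of the projection onto a convex set, and then to exploit the one extra piece of information contained in the hypothesis: that $P_Cx$ lies in the \emph{relative interior} of $F$. Relative interiority is exactly what allows us to move a little bit along \emph{every} direction of $\aff F$ and stay inside $F$, which upgrades the one-sided projection inequality to a two-sided (orthogonality) statement, and orthogonality is precisely what characterizes the projection onto the affine subspace $\aff F$.

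Concretely, first I would set $c:=P_Cx$, so that $c\in\reli F$ by hypothesis, and record that for every $y\in C$ one has $\scal{x-c}{y-c}\le 0$. Since $F\subseteq C$, this inequality holds in particular for every $y\in F$. Next, let $L:=\parasp(\aff F)$ be the linear subspace parallel to $\aff F$; recall that $L=\spn(F-c)$ (since $c\in F$), and that $c\in\reli F$ means $F-c$ contains a relative neighbourhood of $0$ in $L$. Hence for each $v\in L$ there is $t>0$ with $c+tv\in F$ and $c-tv\in F$; substituting $y=c\pm tv$ into the inequality above yields $\pm t\,\scal{x-c}{v}\le 0$, so $\scal{x-c}{v}=0$. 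As $v\in L$ was arbitrary, $x-c\perp L$. Finally, since $c\in F\subseteq\aff F$ and $\aff F$ is a closed affine subspace with direction space $L$, the relation $x-c\perp L$ is exactly the characterization of $c$ as the projection of $x$ onto $\aff F$; therefore $P_{\aff F}x=c=P_Cx$.

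I do not expect a genuine obstacle here: the argument is short and self-contained once the right characterizations are in hand. The only points requiring a little care are (i) justifying that $c\in\reli F$ permits a small step in \emph{both} directions along every line of $\aff F$ through $c$ — which rests on the identity $L=\spn(F-c)$ valid because $c\in\reli F$ — and (ii) being sure to invoke the \emph{two-sided} characterization of projections onto affine subspaces (an equality $x-c\perp L$) rather than the one-sided inequality valid for general convex sets. It is also worth noting for the reader that the fact that $F$ is a \emph{face} of $C$ plays no role in the proof beyond the hypothesis $P_Cx\in\reli F$: the same argument shows more generally that $P_Cx=P_{\aff G}x$ whenever $G\subseteq C$ is convex with $P_Cx\in\reli G$.
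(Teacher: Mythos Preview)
Your argument is correct. The variational characterization of $P_C$ gives the one-sided inequality $\scal{x-c}{y-c}\le 0$ for all $y\in C$, and the fact that $c\in\reli F$ is exactly what lets you move both ways along any direction of $L=\parasp(\aff F)$ while staying in $F\subseteq C$, turning the inequality into orthogonality and hence identifying $c$ as $P_{\aff F}x$. One minor quibble: the identity $L=\spn(F-c)$ already follows from $c\in F$ (so that $0\in F-c$ and $\aff(F-c)$ is linear); relative interiority of $c$ is only needed for the two-sided step, not for this identity.

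As for comparison with the paper: there is nothing to compare. The paper does not prove this statement at all; it records it as a Fact and cites Fodor and Pintea \cite[Theorem~3.1]{Fodor}. Your proof is therefore a self-contained replacement for an external reference, which is a genuine improvement in that it keeps the paper independent of \cite{Fodor}. Your closing observation---that the face hypothesis on $F$ is irrelevant and the conclusion holds for any convex $G\subseteq C$ with $P_Cx\in\reli G$---is also correct and worth stating, since the paper only ever uses the result via \cref{c:projandface}, where $F$ happens to be a face.
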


\begin{corollary}
\label{c:projandface}
Let $C$ be a nonempty closed convex subset of $X$, 
and let $x\in X$. 
Then there exists a face $F$ of $C$ such that 
\begin{equation}
  P_Cx = P_{\aff F}x.
\end{equation}
\end{corollary}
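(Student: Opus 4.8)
The plan is to read off the result by combining the partition of $C$ into relative interiors of its faces (\cref{f:uniqueface}) with the projection identity of Fodor and Pintea (\cref{f:Fodor}). First I would set $c := P_Cx$. This is a well-defined element of $C$ because $X$ is a Hilbert space and $C$ is nonempty, closed, and convex, so $P_C$ is everywhere single-valued. By the partition statement in \cref{f:uniqueface}, the point $c$ lies in the relative interior of exactly one face of $C$; call it $F$ (concretely, $F = F_c$ as defined in \cref{e:F_c}, and \cref{f:uniqueface} guarantees $F_c \in \mathcal{F}(C)$ and $c \in \reli F_c$).

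With this choice of $F$ in hand, I would simply note that $x \in P_C^{-1}(\reli F)$: indeed, by the very definition of $c$ and $F$ we have $P_Cx = c \in \reli F$. Therefore \cref{f:Fodor} applies directly and gives $P_Cx = P_{\aff F}x$, which is precisely the assertion of the corollary.

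I do not expect any genuine obstacle here; the statement is an essentially immediate consequence of the two preceding facts, and the only point that needs to be invoked — that $P_Cx$ always lands in the relative interior of the (unique) face containing it — is exactly what the partition $C = \bigcup_{F\in\mathcal{F}(C)}\reli F$ in \cref{f:uniqueface} provides. Finite-dimensionality is used only through those two facts and requires no separate argument.
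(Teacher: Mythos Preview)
Your proof is correct and follows essentially the same route as the paper: set $c:=P_Cx$, use \cref{f:uniqueface} to obtain the face $F=F_c$ with $c\in\reli F$, observe $x\in P_C^{-1}(\reli F)$, and conclude via \cref{f:Fodor}. There is nothing to add.
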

\begin{proof}
Set $c := P_Cx$. 
Then $c\in C$ and obviously $x\in P_C^{-1}(c)$.
In view of \cref{f:uniqueface}, 
the face $F_c$ defined in \cref{e:F_c} 
satisfies $c\in\reli F_c$. 
Set $F := F_c$. 
Altogether, we deduce that $x\in P_C^{-1}(\reli F)$,
and \cref{f:Fodor} implies $P_Cx = P_{\aff F}x$.
\end{proof}

\subsection*{Polyhedral sets and the extension}

Recall that a subset $C$ of $X$ is a \emph{polyhedral set}
or a \emph{polyhedron} if it is the intersection of 
finitely many closed halfspaces.

Polyhedral sets are precisely those closed convex sets 
with finitely many faces (see \cite[Theorem~19.1]{Rocky} or 
\cite[Theorems 3.2.2 and 3.2.3]{Webster}): 

\begin{fact}
\label{f:polychar}
Let $C$ be a nonempty closed convex subset of $X$. 
Then $C$ is a polyhedron if and only if it has finitely many faces.
\end{fact}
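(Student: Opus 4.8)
The plan is to prove the two implications of \cref{f:polychar} separately. The direction ``polyhedron $\Rightarrow$ finitely many faces'' is an elementary counting argument: write $C=\bigcap_{i=1}^{m}H_i$ with $H_i=\menge{x\in X}{\scal{a_i}{x}\le\beta_i}$, and for $c\in C$ let $I(c):=\menge{i\in\{1,\dots,m\}}{\scal{a_i}{c}=\beta_i}$ be its active index set. For each face $F\in\mathcal{F}(C)$ fix $c\in\reli F$ (nonempty, as $F$ is a nonempty convex subset of the finite-dimensional space $X$). The key point is that
\[
F=\Menge{x\in C}{\scal{a_i}{x}=\beta_i\ \text{for all }i\in I(c)}=:G(c),
\]
a set depending only on the subset $I(c)\subseteq\{1,\dots,m\}$. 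Indeed, $G(c)$ is readily seen to be a face of $C$ (if a point of $\left]x,y\right[$ with $x,y\in C$ makes all constraints in $I(c)$ tight, so do $x$ and $y$), it contains $c$, and a short computation gives $c\in\reli G(c)$; hence $G(c)=F$ by the uniqueness statement in \cref{f:uniqueface}. Thus $F\mapsto I(c)$ is a well-defined injection $\mathcal{F}(C)\hookrightarrow 2^{\{1,\dots,m\}}$, so $C$ has at most $2^{m}$ faces.

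For the converse, assume $\mathcal{F}(C)$ is finite and induct on $d:=\dim(\aff C)$. The base case $d=0$ is trivial. For the inductive step we may assume $\aff C=X$: a subset of the affine subspace $\aff C$ that is polyhedral relative to $\aff C$ is polyhedral in $X$, because $\aff C$ is itself an intersection of finitely many hyperplanes. So $\inte C\neq\varnothing$; if $C=X$ we are done, so assume $\bd C\neq\varnothing$. Every proper face $F\subsetneq C$ has $\dim(\aff F)<d$, and its faces are exactly the faces of $C$ contained in $F$, so $F$ inherits finitely many faces and is a polyhedron by the inductive hypothesis. Let $F_1,\dots,F_k$ be the maximal proper faces of $C$. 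One then shows each $F_j$ has $\dim(\aff F_j)=d-1$, so $\aff F_j$ is a hyperplane, and — via a supporting-hyperplane argument at a point of $\reli F_j$, using maximality of $F_j$ — that $C\cap\aff F_j=F_j$ and $C$ lies in the closed halfspace $H_j$ bounded by $\aff F_j$.

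It remains to verify $C=\bigcap_{j=1}^{k}H_j$. The inclusion ``$\subseteq$'' is immediate. For ``$\supseteq$'', given $x\in\bigcap_j H_j$ with $x\notin C$, pick $c\in\inte C$ and let $y$ be the point where $[c,x]$ exits $C$; then $y\in\bd C$, so by \cref{f:uniqueface} $y\in\reli F$ for some proper face $F$, hence $y\in F\subseteq F_j\subseteq\aff F_j$ for some $j$ (finiteness of $\mathcal{F}(C)$ places $F$ inside a maximal proper face). Since $c\in\inte C\subseteq\inte H_j$ while $y$ lies on the bounding hyperplane $\aff F_j$, the affine functional defining $H_j$ strictly increases along $[c,x]$ and therefore exceeds its bound at $x$, contradicting $x\in H_j$. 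Hence $C=\bigcap_{j}H_j$ is a polyhedron, closing the induction.

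The step I expect to be the main obstacle is the claim that the maximal proper faces have codimension exactly one: a priori such a face could have higher codimension (as for the Euclidean ball — which, however, has infinitely many faces), so the argument must use finiteness essentially. The resolution I have in mind: if every proper face had $\dim\le d-2$, then $\bd C=\bigcup_{F\ \text{proper}}\reli F$ would be contained in a finite union of affine subspaces of codimension $\ge2$, whose complement in $X$ is path-connected; a path from $\inte C$ to the exterior of $C$ could then avoid $\bd C$ altogether, which is absurd. This topological point — together with the clean form of \cref{f:polychar} — is exactly what is recorded in \cite[Theorem~19.1]{Rocky} and \cite[Theorems~3.2.2 and 3.2.3]{Webster}, which we may alternatively just invoke.
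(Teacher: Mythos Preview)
Your proof is correct. Note, however, that the paper does not actually prove \cref{f:polychar}: it is stated as a \emph{Fact} and simply attributed to \cite[Theorem~19.1]{Rocky} and \cite[Theorems~3.2.2 and 3.2.3]{Webster} in the sentence preceding it. So there is no ``paper's own proof'' to compare against beyond these citations --- which you yourself invoke in your final paragraph.

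That said, your self-contained argument is sound and tracks the classical development in those references. The forward direction via the active-index map $F\mapsto I(c)$ is clean; the only cosmetic point is that you do not need the map to be well defined (independent of the choice of $c\in\reli F$) --- injectivity already follows from $F=G(c)$ depending only on $I(c)$, as you implicitly use. For the converse, the two substantive steps --- (a) maximal proper faces have codimension exactly one, and (b) the resulting halfspaces $H_j$ cut out $C$ exactly --- are handled correctly. Your path-connectedness argument for (a) is the right idea and genuinely uses finiteness of $\mathcal{F}(C)$; for (b), the identification $C\cap\aff F_j=F_j$ indeed follows because any supporting hyperplane at $c\in\reli F_j$ yields an exposed (hence proper) face containing $F_j$, forcing equality by maximality, and then that hyperplane must coincide with $\aff F_j$ by dimension.

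In short: your write-up supplies what the paper outsources to the literature, and does so along the same classical lines; there is no divergence in approach, only in level of detail.
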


We are now ready to extend \cref{f:Meshulam} from 
affine subspaces to polyhedral sets:

\begin{theorem}
\label{t:Meshpoly} 
Let $\mathcal{C}$ be a nonempty finite collection of 
nonempty polyhedral subsets of $X$. 
Let $\lambda\in[0,2[$ and $x_0\in X$.
Generate a sequence $(x_n)_{\nnn}$ in $X$ as follows:
Given a current term $x_n$, pick $R_n \in \mathcal{R}_{\mathcal{C},[0,\lambda]}$, and update via
\begin{equation}
  x_{n+1} := R_n x_n.
\end{equation}
Then the sequence $(x_n)_{\nnn}$ is bounded.
\end{theorem}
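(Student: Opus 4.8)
The plan is to reduce \cref{t:Meshpoly} directly to \cref{f:Meshulam}, by replacing each polyhedron in $\mathcal{C}$ with the \emph{finite} family of affine hulls of its faces. Concretely, set
\[
  \mathcal{A} := \menge{\aff F}{C\in\mathcal{C},\ F\in\mathcal{F}(C),\ F\neq\varnothing}.
\]
Since $\mathcal{C}$ is finite and every $C\in\mathcal{C}$ is a polyhedron, \cref{f:polychar} guarantees that each $\mathcal{F}(C)$ is finite; hence $\mathcal{A}$ is a finite collection. It is nonempty because $\mathcal{C}\neq\varnothing$ and each member of $\mathcal{C}$, being nonempty, is a nonempty face of itself. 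Finally, each element of $\mathcal{A}$ is the affine hull of a nonempty set, hence a nonempty affine subspace of $X$, and — as $X$ is finite-dimensional — automatically closed. Thus $\mathcal{A}$ qualifies as the kind of collection to which \cref{f:Meshulam} applies.

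Next I would show that the given sequence $(x_n)_\nnn$ is \emph{also} a sequence produced by relaxed projections drawn from $\mathcal{R}_{\mathcal{A},[0,\lambda]}$. Fix $n\in\mathbb{N}$. By definition of $\mathcal{R}_{\mathcal{C},[0,\lambda]}$ there exist $C_n\in\mathcal{C}$ and $\lambda_n\in[0,\lambda]$ with
\[
  x_{n+1} = R_n x_n = (1-\lambda_n)x_n + \lambda_n P_{C_n}x_n.
\]
Applying \cref{c:projandface} to the closed convex set $C_n$ and the point $x_n$ produces a face $F_n$ of $C_n$ with $P_{C_n}x_n = P_{\aff F_n}x_n$; the face $F_n$ is nonempty, since its construction in the proof of \cref{c:projandface} gives $F_n = F_{c}$ with $c := P_{C_n}x_n \in F_{c}$, so $\aff F_n\in\mathcal{A}$. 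Consequently
\[
  x_{n+1} = (1-\lambda_n)x_n + \lambda_n P_{\aff F_n}x_n =: R_n' x_n,
  \qquad R_n'\in\mathcal{R}_{\mathcal{A},[0,\lambda]}.
\]
Thus $(x_n)_\nnn$ is exactly a sequence of the type considered in \cref{f:Meshulam}, generated from the finite collection $\mathcal{A}$ of closed affine subspaces with the same starting point $x_0$ and the same $\lambda\in[0,2[$. \cref{f:Meshulam} then applies verbatim and yields that $(x_n)_\nnn$ is bounded.

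Concerning difficulty: essentially all the substantive work is already packaged — in \cref{c:projandface} (which rests on the Fodor--Pintea identity \cref{f:Fodor}) and in Meshulam's theorem itself. The only genuine point to watch in the present argument is the bookkeeping that $\mathcal{A}$ is finite, which is precisely where \cref{f:polychar} — the characterization of polyhedra as the closed convex sets with finitely many faces — is indispensable. Dropping polyhedrality and allowing arbitrary closed convex sets would in general make $\mathcal{A}$ infinite and break the reduction; so this step is both the crux of the proof and an indication of why the hypothesis cannot be weakened for free.
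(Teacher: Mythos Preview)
Your proof is correct and follows essentially the same route as the paper: you build the finite collection $\mathcal{A}$ of affine hulls of faces via \cref{f:polychar}, use \cref{c:projandface} to rewrite each step $x_{n+1}=R_nx_n$ as a relaxed projection onto some member of $\mathcal{A}$, and then invoke \cref{f:Meshulam}. The only differences are cosmetic---you are slightly more explicit about nonemptiness and about the choice of $\lambda_n$ and $C_n$ at each step.
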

\begin{proof}
By \cref{f:polychar},
each polyhedral set $C\in\mathcal{C}$ has finitely many faces, i.e., 
$\mathcal{F}(C)$ is finite. 
Because $\mathcal{C}$ is finite, so is 
$\mathcal{F} := \bigcup_{C\in\mathcal{C}} \mathcal{F}(C)$
and also 
$\{\aff F\}_{F\in\mathcal{F}}$. 
Now suppose that $\mathcal{A} = \{\aff F\}_{F\in\mathcal{F}}$.
If $x\in X$ and $C\in\mathcal{C}$, then 
\cref{c:projandface} 
yields $P_Cx = P_Ax$, for some $A\in\mathcal{A}$ depending 
on $C$ and $x$. 
It follows that the sequence $(x_n)_\nnn$ can be viewed 
as being generated by $\mathcal{A}$. 
Therefore, the result follows from \cref{f:Meshulam}.
\end{proof}

\section{Extension to infinite-dimensional spaces}
\label{s:infdim}

In this section, we assume that 
\begin{equation} 
\text{
$X$ is finite or infinite-dimensional. 
}
\end{equation}

The following representation result for a convex polyhedral set and 
its projection is useful (see \cite[Theorem~3.3.16]{HBthesis}): 

\begin{fact}
\label{f:thesis}
Let $C$ be a nonempty polyhedral subset of $X$, represented 
as 
\begin{equation}
C = \menge{x\in X}{(\forall i\in I)\;\;\scal{a_i}{x}\leq \beta_i}, 
\end{equation}
where $I$ is a finite index set, each $a_i$ belongs to 
$X\smallsetminus\{0\}$ and each $\beta_i$ is a real number.
Let $K$ be closed linear subspace of $\bigcap_{i\in I}\ker a_i$, 
and set $D := C\cap K^\perp$. 
Then $D$ is a nonempty polyhedral subset of $K^\perp$ and 
\begin{equation}
\label{e:thesis}
P_C = P_K + P_DP_{K^\perp}. 
\end{equation}
Note that if $K = \bigcap_{i\in I}\ker a_i$, 
then $K^\perp$ is finite-dimensional. 
\end{fact}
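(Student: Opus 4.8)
The plan is to reduce everything to the orthogonal decomposition $X=K\oplus K^\perp$ together with the single observation that, since $K\subseteq\bigcap_{i\in I}\ker a_i$, each $a_i$ in fact lies in $K^\perp$. Consequently $\scal{a_i}{P_Kx}=0$ for every $x\in X$, and therefore
\[
\scal{a_i}{x}=\scal{a_i}{P_{K^\perp}x}\qquad(\forall\,i\in I),
\]
so the linear inequalities cutting out $C$ depend only on the $K^\perp$-component of a point. This identity is what makes the whole statement collapse to a short computation.

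First I would verify the claims about $D=C\cap K^\perp$. For $y\in K^\perp$ the condition $y\in C$ reads $\scal{a_i}{y}\le\beta_i$ for all $i\in I$, so
\[
D=\{\,y\in K^\perp \mid (\forall\,i\in I)\ \scal{a_i}{y}\le\beta_i\,\}
\]
is an intersection of finitely many closed halfspaces of the Hilbert space $K^\perp$ — note each $a_i$ is a \emph{nonzero} vector of $K^\perp$ — hence a polyhedral subset of $K^\perp$, in particular closed and convex. For nonemptiness, take any $c\in C$ (possible since $C\neq\varnothing$); then $P_{K^\perp}c\in K^\perp$ and $\scal{a_i}{P_{K^\perp}c}=\scal{a_i}{c}\le\beta_i$ for every $i$, so $P_{K^\perp}c\in D$ and $P_D$ is well defined.

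Next I would establish the projection formula via the obtuse-angle (variational) characterization of the projection onto a nonempty closed convex set. Fix $x\in X$ and put $u:=P_Kx$, $v:=P_{K^\perp}x$ (so $x=u+v$) and $w:=P_Dv\in D$; since $v\in K^\perp$ and $D\subseteq K^\perp$, this last projection coincides with the one taken inside $K^\perp$. I claim $P_Cx=u+w$. Indeed $u+w\in C$ because $\scal{a_i}{u+w}=\scal{a_i}{w}\le\beta_i$ for all $i$. It remains to check $\scal{x-(u+w)}{c-(u+w)}\le 0$ for every $c\in C$. Since $x-(u+w)=v-w\in K^\perp$, and writing $c=P_Kc+P_{K^\perp}c$ with $P_{K^\perp}c\in D$ (same computation as above), the vector $v-w$ is orthogonal to the $K$-component $P_Kc-u$ of $c-(u+w)$, whence
\[
\scal{x-(u+w)}{c-(u+w)}=\scal{v-w}{P_{K^\perp}c-w}\le 0,
\]
the last inequality being exactly the variational characterization of $w=P_Dv$ tested against the point $P_{K^\perp}c\in D$. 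This yields $P_Cx=u+w=P_Kx+P_DP_{K^\perp}x$.

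For the final assertion, if $K=\bigcap_{i\in I}\ker a_i$ then $K=(\spn\{a_i\mid i\in I\})^\perp$, so $K^\perp=\overline{\spn}\{a_i\mid i\in I\}=\spn\{a_i\mid i\in I\}$, which is finite-dimensional because $I$ is finite. I do not expect a genuine obstacle anywhere: the argument is entirely driven by the observation $a_i\in K^\perp$, and the only place calling for care is bookkeeping which ambient space ($X$ versus $K^\perp$) each projection is computed in — harmless here since $D\subseteq K^\perp$ and the argument of $P_D$ always lands in $K^\perp$.
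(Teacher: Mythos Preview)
Your argument is correct. The key observation that $a_i\in K^\perp$ (since $K\subseteq\ker a_i$) is exactly what makes the constraints depend only on the $K^\perp$-component, and your verification of the variational inequality for $P_C$ by pushing everything into $K^\perp$ and invoking the characterization of $P_D$ is clean and complete. The nonemptiness of $D$, its polyhedrality in $K^\perp$, and the finite-dimensionality claim are all handled correctly.

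As for comparison: the paper does not supply a proof of this statement at all---it is recorded as a Fact with a reference to \cite[Theorem~3.3.16]{HBthesis}. So you have produced a self-contained proof where the paper only cites one. Your approach via the obtuse-angle characterization of the projection is the natural one and is almost certainly what the cited source does as well; in any case there is no discrepancy to flag.
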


We are now ready to extend \cref{t:Meshpoly} to
general Hilbert spaces: 

\begin{theorem}[main result]
\label{t:Meshinfi} 
Let $\mathcal{C}$ be a nonempty finite collection of 
nonempty polyhedral subsets of $X$. 
Let $\lambda\in[0,2[$ and $x_0\in X$.
Generate a sequence $(x_n)_{\nnn}$ in $X$ as follows:
Given a current term $x_n$, pick $R_n \in \mathcal{R}_{\mathcal{C},[0,\lambda]}$, and update via
\begin{equation}
  x_{n+1} := R_n x_n.
\end{equation}
Then the sequence $(x_n)_{\nnn}$ is bounded.
\end{theorem}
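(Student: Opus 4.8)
The plan is to reduce the infinite-dimensional statement to the finite-dimensional \cref{t:Meshpoly} by isolating a finite-dimensional subspace that carries all the action of the projectors while the complementary subspace is left pointwise fixed. Write $\mathcal{C} = \{C_1,\dots,C_m\}$. For each $j$ I would fix a representation
$C_j = \menge{x\in X}{(\forall i\in I_j)\;\scal{a_i}{x}\le\beta_i}$ with $I_j$ finite and each $a_i\in X\smallsetminus\{0\}$ (this exists since $C_j$ is polyhedral), and then set $I := \bigcup_{j=1}^m I_j$, a finite index set, and $L := \bigcap_{i\in I}\ker a_i$, a closed linear subspace of $X$. Since $L = (\spn\{a_i : i\in I\})^{\perp}$ and $I$ is finite, $L^\perp = \cspan\{a_i : i\in I\}$ is finite-dimensional.

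Because $I_j\subseteq I$, we have $L\subseteq\bigcap_{i\in I_j}\ker a_i$ for every $j$, so \cref{f:thesis} applies to each $C_j$ with $K = L$: each $D_j := C_j\cap L^\perp$ is a nonempty polyhedral subset of the finite-dimensional space $L^\perp$ and
\[
  P_{C_j} = P_L + P_{D_j}P_{L^\perp},
\]
where, since $D_j\subseteq L^\perp$ and $L^\perp$ is a closed subspace, the restriction of $P_{D_j}$ to $L^\perp$ is exactly the orthogonal projector onto $D_j$ computed within $L^\perp$. Degenerate cases are harmless here: if $C_j = X$ (i.e.\ $I_j = \varnothing$), then $D_j = L^\perp$ and $P_{C_j} = \Id$, consistent with the identity above.

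Next I would decompose the iteration along $X = L\oplus L^\perp$. Writing $x_n = u_n + v_n$ with $u_n := P_Lx_n\in L$ and $v_n := P_{L^\perp}x_n\in L^\perp$, and $R_n = (1-\lambda_n)\Id + \lambda_n P_{C_{j_n}}$ with $\lambda_n\in[0,\lambda]$, the displayed identity together with the linearity of $P_L$ and $P_{L^\perp}$ gives
\[
  x_{n+1} = R_nx_n = u_n + \bigl((1-\lambda_n)v_n + \lambda_n P_{D_{j_n}}v_n\bigr).
\]
Hence $u_{n+1} = u_n$, so $u_n\equiv u_0$ is constant, while $(v_n)_\nnn$ is a sequence in the finite-dimensional space $L^\perp$ generated from $v_0$ by repeatedly applying relaxed projectors of the form $(1-\lambda_n)\Id + \lambda_n P_{D_{j_n}}$ drawn from $\mathcal{R}_{\{D_1,\dots,D_m\},[0,\lambda]}$ with $\lambda\in[0,2[$.

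Finally, \cref{t:Meshpoly}, applied inside $L^\perp$ to the finite collection $\{D_1,\dots,D_m\}$ of nonempty polyhedral sets, yields that $(v_n)_\nnn$ is bounded; since $(u_n)_\nnn$ is constant, $(x_n)_\nnn = (u_n+v_n)_\nnn$ is bounded. The step needing the most care is the very first one: choosing a single subspace $L$ small enough to sit inside every $\bigcap_{i\in I_j}\ker a_i$ simultaneously, and then checking via \cref{f:thesis} that the $L$-component of the iterate is genuinely frozen and that the $L^\perp$-component evolves autonomously through relaxed projections onto the $D_j$'s — after that, the reduction to the already-established finite-dimensional case is immediate.
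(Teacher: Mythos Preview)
Your proposal is correct and follows essentially the same route as the paper: construct a common cofinite-dimensional subspace $L$ (your $L=\bigcap_{i\in I}\ker a_i$ coincides with the paper's $K=\bigcap_{C}K_C$), invoke \cref{f:thesis} to split each projector as $P_L+P_{D_j}P_{L^\perp}$, observe that the $L$-component of the iterate is frozen while the $L^\perp$-component evolves by relaxed projections onto the finite collection of polyhedra $D_j\subseteq L^\perp$, and conclude via \cref{t:Meshpoly}. Your write-up is in fact slightly more explicit than the paper's in verifying the decomposition $x_{n+1}=u_n+\bigl((1-\lambda_n)v_n+\lambda_n P_{D_{j_n}}v_n\bigr)$ and in handling the degenerate case $C_j=X$.
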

\begin{proof}
For each $C\in\mathcal{C}$, we pick
a finite-codimensional closed 
linear subspace $K_C$ of $X$ as in 
\cref{f:thesis}. 
Next, we set $K := \bigcap_{C\in\mathcal{C}} K_C$.
Then $K$ is a closed linear subspace of $X$, still 
finite-codimensional. 
By \cref{f:thesis} again, we have 
\begin{equation}
\label{e:boom}
(\forall C\in\mathcal{C})\quad
P_C = P_K + P_{D_C}P_{K^\perp},
\end{equation}
where $D_C = C \cap K^\perp$ is a nonempty polyhedral subset of 
the finite-dimensional space $K^\perp$.
The starting point $x_0$ is decomposed as 
$x_0 = k_0 + y_0$, where 
$k_0 := P_Kx_0 \in K$ and $y_0 := P_{K^\perp}x_0 \in K^\perp$. 
Observe that \cref{e:boom} implies that 
$(x_n)_\nnn = (k_0)_\nnn +(y_n)_\nnn$, 
where $(y_n)_\nnn$  
is generated using the finite collection of polyhedral subsets 
$\{D_C\}_{C\in\mathcal{C}}$ in the finite-dimensional space $K^\perp$.
By \cref{t:Meshpoly}, $(y_n)_\nnn$ is bounded, and hence so 
is $(x_n)_\nnn$. 
\end{proof}

\section{Two limiting examples}
\label{s:counterex}

\subsection{Impossibility to extend from polyhedral sets 
to convex sets}

The following example shows that we cannot go beyond polyhedral sets, 
even when we work in the Euclidean plane, 
the collection contains only two sets, one of which is a 
line: 

\begin{example}
Suppose that $X=\RR^2$, and set $C_1 := \RR\times\{0\}$ and 
$C_2 := \epi\exp = \menge{(x,y)\in\RR^2}{\exp(x)\leq y}$. 
Note that while $C_1$ is a (closed) linear subspace of $X$ and 
hence polyhedral, 
the set $C_2$ is closed and convex but not a polyhedral set. 
Because $C_1\cap C_2=\varnothing$, 
the ``gap'' between $C_1$ and $C_2$, 
$\inf\|C_1-C_2\|$, is zero but not attained\footnote{That is, 
the infimum is not a minimum.}. 
Let $x_0\in X$ and generate the sequence of alternating projections 
via 
\begin{equation}
x_{2n+1} := P_{C_1}x_{2n} 
\;\;\text{and}\;\;
x_{2n+2} := P_{C_2}x_{2n+1}.
\end{equation}
By \cite[Corollary~4.6]{02}, we have $\|x_n\|\to\infty$. 
Consequently, \cref{t:Meshpoly} cannot be extended from 
polyhedral sets to general closed convex sets. 
\end{example}

\subsection{Impossibility to extend Meshulam's result to general  
Hilbert spaces} 

While we succeeded in generalizing Meshulam's result 
to polyhedral sets in general Hilbert spaces 
(see \cref{t:Meshinfi}), it is not possible to extend his 
result to general closed affine subspaces in general Hilbert spaces: 

\begin{example}
Following \cite[Example~4.3]{02}, there exists 
a (necessarily infinite-dimensional) Hilbert space $X$ 
with two closed affine subspaces $A_1$ and $A_2$ such that 
their ``gap'' $\inf\|A_1-A_2\|$ is equal to $1$ but not attained. 
Let $x_0\in X$ and generate the sequence of alternating projections 
via 
\begin{equation}
x_{2n+1} := P_{A_1}x_{2n} 
\;\;\text{and}\;\;
x_{2n+2} := P_{A_2}x_{2n+1}.
\end{equation}
By \cite[Corollary~4.6]{02}, we have $\|x_n\|\to\infty$. 
Consequently, \cref{f:Meshulam} cannot be extended from 
Euclidean spaces to general Hilbert spaces.
\end{example}

\section{Approaching reflections} 
\label{s:reflections}

Let us return to a specific setting of \cref{f:Meshulam}:
Suppose that $\mathcal{A}=\{A_1,A_2\}$, where
$A_1$ and $A_2$ are affine subspaces of $X$, assumed to 
be finite-dimensional. 
Pick a sequence of relaxation parameters 
$(\lambda_n)_{\nnn}$ in $[0,2]$, a starting point $x_0\in X$, and
form the sequence of alternating relaxed projections via 
\begin{subequations}
\label{e:0623a}
\begin{align}
x_{2n+1} &:= (1-\lambda_{2n}) x_{2n} + \lambda_{2n} P_{A_1}x_{2n}, \\
x_{2n+2} &:= (1-\lambda_{2n+1}) x_{2n+1} + \lambda_{2n+1} 
P_{A_2}x_{2n+1}.
\end{align}
\end{subequations}
It follows from \cref{f:Meshulam} that 
\begin{equation}
\text{
if $\textstyle \varlimsup_n\lambda_n<2$,
then the sequence $(x_n)_{\nnn}$ is bounded. 
}
\end{equation}

We now investigate the case when $\varlimsup_n\lambda_n=2$; 
in other words, a subsequence of the operators $R_n$ approximate 
reflections. 
We specialize further to the case when 
$X=\RR$, $A_1=\{a\}$, and $A_2=\{b\}$, where  
$a,b$ are distinct\footnote{If $a=b$, then $A_1\cap A_2\neq \varnothing$,
and the sequence $(x_n)_\nnn$ stays bounded no matter how 
$(\lambda_n)_\nnn$ is chosen in $[0,2]$.} real numbers.
First, \cref{e:0623a} turns into 
\begin{subequations}
\label{e:0623b}
\begin{align}
x_{2n+1} &:= (1-\lambda_{2n}) x_{2n} + \lambda_{2n} a, \\
x_{2n+2} &:= (1-\lambda_{2n+1}) x_{2n+1} + \lambda_{2n+1} b.
\end{align}
\end{subequations}
Note that $x_{2n+1}-x_{2n} = \lambda_{2n}(a-x_{2n})$; 
thus if $(x_{2n})_\nnn$ is bounded, then so is $(x_{2n+1})_\nnn$.
Thus, we focus on the even terms, and we set 
for every $k,n\in\mathbb{N}$, 
\begin{subequations}
\begin{align}
y_{n} &:= x_{2n}, \;
\varepsilon_n := 2-\lambda_n, \; \\
\gamma_n &:= (1-\lambda_{2n})(1-\lambda_{2n+1}) =
1 -\varepsilon_{2n} - \varepsilon_{2n+1} + 
\varepsilon_{2n}\varepsilon_{2n+1}, \;\\
\delta_n &:= 1-\gamma_n, \;
d_n := (1-\lambda_{2n+1})\lambda_{2n}a + \lambda_{2n+1} b, \\
\Gamma_{k,n} &:= \begin{cases}
\gamma_k\cdots \gamma_{n} = (1-\delta_k)\cdots(1-\delta_{n}),\text{ if }k\leq n,\\
1,\text{ if }k> n.
\end{cases}
\end{align}
\end{subequations}
Then \cref{e:0623b} leads to the recursion 
\begin{equation}
\label{e:0623c-}
y_{n+1} = \gamma_n y_n + d_n, 
\end{equation}
which has the solution 
\begin{equation}
\label{e:0623c}
y_n = \Gamma_{0,n-1} y_0 + \sum_{k=0}^{n-1} d_k {\Gamma_{k+1,n-1}}.
\end{equation}

\subsection{The case $\lim_n \lambda_n = 2$}

We now assume that $1<\lambda_n\to 2$. 
It follows that $\varepsilon_n\to 0$, 
$\gamma_n\to 1^-$, $\delta_n\to 0^+$, and 
$d_n \to 2(b-a)$. 
Note that since $\varepsilon_n\in[0,1[$ for all $n\in\mathbb{N}$, we obtain
\begin{subequations}
\label{260113}
\begin{align}
\sum_n\varepsilon_n&=\sum_n(\varepsilon_{2n}+\varepsilon_{2n+1})\\
&\geq\sum_n(\varepsilon_{2n}+\varepsilon_{2n+1}-\varepsilon_{2n}\varepsilon_{2n+1})=\sum_n\delta_n\\
&\geq\sum_n\varepsilon_{n}-\frac{1}{2}\sum_n(\varepsilon_{2n}^2+\varepsilon_{2n+1}^2)\\
&=\sum_n\varepsilon_{n}-\frac{1}{2}\sum_n\varepsilon_{n}^2\\
&\geq\frac{1}{2}\sum_n\varepsilon_{n};
\end{align}
\end{subequations}
consequently, $\sum_n\varepsilon_n<+\infty$ if and only if $\sum_n\delta_n<+\infty$.

\subsubsection{The subcase $\sum_n\varepsilon_n<+\infty$}
\label{sss:finite}
We now assume that $\sum_n\varepsilon_n<+\infty$.
It follows that 
$\sum_n\delta_n<+\infty$ and hence 
\begin{equation}
\Gamma_{0,n} = \prod_{k=0}^{n} (1-\delta_k) \to \ell > 0. 
\end{equation}
Then there exists $m\in\mathbb{N}$ such that
\begin{equation}
\left(\forall m\leq k \leq n\right)\quad\sign\left(d_k\Gamma_{k+1,n-1}\right) = \sign(b-a)\text{ and } |d_k| \Gamma_{k+1,n-1}>|b-a|{\ell}. 
\end{equation}
Hence \cref{e:0623c} yields 
$y_n\to \sign(b-a)\infty$. 
Consequently,
\begin{equation}
x_{2n} \to \sign(b-a)\infty
\;\;\text{and}\;\;
x_{2n+1} \to \sign(a-b)\infty.
\end{equation}

\subsubsection{The subcase $\sum_n\varepsilon_n=+\infty$}
We now assume that $\sum_n\varepsilon_n=+\infty$.
It follows that $\sum_n\delta_n=+\infty$ and hence 
\begin{equation}
\Gamma_{0,n} = \prod_{k=0}^{n} (1-\delta_k) \to 0. 
\end{equation}
Let's us focus on a case that allows for closed-form expressions. 
Set 
\begin{equation}
(\forall\nnn)\quad 
\lambda_n=\frac{2n+3}{n+2};
\;\;\text{hence,}\;\;
\gamma_n = \frac{2n+1}{2n+3}\;\text{and}\;
\delta_n = \frac{2}{2n+3}. 
\end{equation}
Hence, using telescoping, we have 
$\Gamma_{k,n} = (2k+1)/(2n+3)$. 
Furthermore,  \cref{e:0623c} yields 
\begin{equation}
y_n = \frac{1}{2n+1}y_0 + \sum_{k=0}^{n-1} d_k \frac{2k+3}{2n+1}
= \frac{1}{2n+1}y_0 + \frac{1}{n}\sum_{k=0}^{n-1} \frac{d_k(2k+3)}{2+1/n}.
\end{equation}
Because $\lim_k d_k(2k+3)=\sign(b-a)\infty$, we deduce that 
$\lim_n y_n=\sign(b-a)\infty$ as well (using Ces\`aro averages). 
Consequently, as before, 
\begin{equation}
x_{2n} \to \sign(b-a)\infty
\;\;\text{and}\;\;
x_{2n+1} \to \sign(a-b)\infty.
\end{equation}
In the general case, we cannot telescope and the analysis is 
likely significantly harder.

\subsection{The case $\varliminf_n\lambda_n<\varlimsup_n \lambda_n = 2$}

No general conclusions can be drawn in this case---%
we shall present two examples that show that the sequence 
$(x_n)_\nnn$ may have quite different behaviours.

\subsubsection{A case where $(x_n)_\nnn$ is bounded} 
\label{sss:bounded}
Assume that 
\begin{equation}
(\forall\nnn)\quad 
\lambda_{2n} = 2-\rho_n\in\left]1,2\right[
\;\;\text{and}\;\;
\lambda_{2n+1} = 1, 
\end{equation}
and $\rho_n\to 0$. 
Then $(\forall\nnn)$ $d_n = b$. 
Moreover, $\gamma_n=0$, and \cref{e:0623c-} yields 
$y_{n+1}=d_n= b$. 
Hence $x_{2n+1}=(\rho_n-1)y_n + (2-\rho_n)a \to 
-b+2a = 2a-b$. 
To sum up, 
\begin{equation}
x_{2n}\to b
\;\;\text{and}\;\;
x_{2n+1}\to 2a-b;
\end{equation}
in particular, the sequence $(x_n)_\nnn$ is bounded.

\subsubsection{A case where $(x_n)_\nnn$ is unbounded} 
\label{sss:mixed}

Now assume that $(\mu_n)_\nnn$ is a relaxation parameter 
sequence such that $|x_n|\to +\infty$ 
(e.g., take $\mu_n = 2-1/2^{n+1}$ and recall 
\cref{sss:finite}). 
Using this, we create another relaxation parameter sequence 
by truncating $(\mu_n)_\nnn$ and resetting to $1$ in the following way:
\begin{equation}
(\lambda_n)_\nnn := 
\big(\mu_0,1,\mu_0,\mu_1,\mu_2,1,\mu_0,\mu_1,\mu_2,\mu_3,\mu_4,1,
\mu_0,\mu_1,\mu_2,\mu_3,\mu_4,\mu_5,\mu_6,1,\ldots\big). 
\end{equation}
Then for every $\nnn$, $\lambda_{n(n+1)-1}=1$ and 
so $x_{n(n+1)} = b$. 
Because the stretches between consecutive ones grow longer and longer, 
we deduce that $|x_{n(n+1)-1}|\to\infty$.  
Thus, $(x_n)_\nnn$ is unbounded  with 
$\varliminf_n|x_n|<+\infty$. 

\subsection{Summary}

To sum up, we've seen that 
the case when $\varlimsup_n\lambda_n=2$ exhibits quite subtle
behaviour:
\begin{itemize}[itemsep=0pt]
\item The sequence $(x_n)_\nnn$ may be convergent (even 
constant, when $x_0=a=b$). 
\item The sequence $(x_n)_\nnn$ may be bounded 
but not convergent 
(see \cref{sss:bounded}). 
\item The sequence $(x_n)_\nnn$ may satisfy 
$\varliminf_n|x_n|<\varlimsup_n|x_n|=+\infty$ 
(see \cref{sss:mixed}). 
\item The sequence $(x_n)_\nnn$ may satisfy 
$\lim_n|x_n|=+\infty$ 
(see \cref{sss:finite}). 
\end{itemize}

\section*{Acknowledgments}
The authors thank the handling editor, Dr.~Walaa Moursi, and the referee 
for constructive and helpful comments. 
The research of HHB was supported by a Discovery Grant from the Natural Sciences and Engineering Research Council of Canada.

\end{document}